\documentclass{llncs}

\usepackage[utf8]{inputenc}
\usepackage{enumerate}
\usepackage[shortlabels]{enumitem}
\usepackage{graphicx}
\usepackage{epstopdf}
\usepackage{amsfonts}
\usepackage{amssymb}
\usepackage{amsmath}
\usepackage[labelfont=bf]{caption}
\usepackage{sectsty}
\usepackage{wasysym}
\usepackage{hyperref}

\newcommand{\Sel}{\mathrm{Sel}}
\newcommand{\gen}{\mathrm{gen}}

\begin{document}

\title{Selection-based Approach\\ to Cooperative Interval Games}

\author{Jan Bok\inst{1} \and Milan Hlad\'{i}k\inst{2}}

\institute{
Computer Science Institute of Charles University, Malostransk\'{e} n\'{a}m\v{e}st\'{i} 25, 11800, Prague, Czech Republic \email{bok@iuuk.mff.cuni.cz}
\and
Department of Applied Mathematics, Faculty of Mathematics and Physics, Charles University, Malostransk\'{e} n\'{a}m\v{e}st\'{i} 25, 11800, Prague, Czech Republic \email{hladik@kam.mff.cuni.cz}
}

\date{}

\maketitle

\begin{abstract}
Cooperative interval games are a generalized model of cooperative games in
which the worth of every coalition corresponds to a closed interval representing
the possible outcomes of its cooperation. Selections are all possible
outcomes of the interval game with no additional uncertainty.

We introduce new selection-based classes of interval games and prove their
characterization theorems and relations to existing classes based on the interval weakly better
operator. We show new results regarding the core and imputations and examine a
problem of equivalence for two different versions of the core, the main stability
solution of cooperative games. Finally, we introduce the definition of strong imputation and strong core
as universal solution concepts of interval games.
\\
\\
This is a slightly updated version of our paper from ICORES 2015.
\\
\\
\textbf{2010 Mathematics Subject Classification}: 65G99, 91A12 \\
\textbf{JEL Classification}: C71, D81
\end{abstract}

\section{Introduction}

Uncertainty and inaccurate data are issues occurring very often in real-world
situations. Therefore it is important to be able to make decisions even when
the exact data are not available and some bounds on them are known.

In classical cooperative game theory, every group of players
(\emph{coalition}) knows the precise reward for their cooperation; in
cooperative interval games, only the worst and the best possible outcome are
known. Such situations can be naturally modeled with intervals encapsulating
these outcomes. This model is especially useful if we have no additional
assumption on probability distribution on this interval. In some sense,
cooperative interval games get the best of both worlds. We count in all the
possible outcomes, yet our model is sufficiently simple to analyze.

Cooperation under interval uncertainty was first considered by Branzei,
Dimitrov and Tijs in 2003 to study bankruptcy situations \cite{bank} and later
further extensively studied by Alparslan G\"{o}k in her Ph.D. thesis
\cite{gokphd} and in other papers written together with Branzei et al. (see
references in \cite{coop1}).

However, their approach is almost exclusively aimed at interval solutions,
that is on payoff distributions consisting of intervals and thus containing
yet another uncertainty. This is in contrast with selections -- possible
outcomes of an interval game with no additional uncertainty. The selection-
based approach was never systematically studied and not very much is known.
This paper is trying to fix this and summarizes our results regarding the
selection-based approach to interval games.

The paper has the following structure. Section 2 is a preliminary section that
presents necessary definitions and facts on classical cooperative games,
interval analysis, and cooperative interval games. Section 3 is devoted to new
selection-based classes of interval games. We consequently prove their
characterizations and relations to existing classes. Section 4 focuses on the
so-called core incidence problem which asks under which conditions are the
selection core and the set of payoffs generated by the interval core equal. In
Section 5, the definitions of strong core and strong imputation are introduced
as new concepts. We show some simple results on the strong core, one of them
being a characterization of games with the strong imputation and strong core.
Finally, we conclude this paper with a summary of our results and possible
directions for future research.

\section{Preliminaries}

\subsection{On mathematical notation}
\begin{itemize}[leftmargin=*]
\item We will use $\le$ relation on real vectors. For every $x,y \in \mathbb{R}^N$
we write $x \le y$ if $x_i \le y_i$ holds for every $1 \le i \le N$.
\item We do not use symbol $\subset$ in this paper. Instead, $\subseteq$ and $\subsetneq$ are
used for subset and proper subset, respectively, to avoid ambiguity.
\end{itemize}

\subsection{Classical cooperative game theory}

Comprehensive sources on classical cooperative game theory are for example
\cite{branzei2008models,driessen1988cooperative,gilles2010cooperative,peleg2007introduction}. For more
information on applications, see e.g. \cite{bilbao2000cooperative,combopt,insurance}. Here we present only necessary background
theory for studying interval games. We examine games with transferable utility
(TU) and therefore by a cooperative game we mean a cooperative TU game.

\begin{definition}
  \emph{(Cooperative game)} 
  A cooperative game is an ordered pair $(N, v)$, where $N = \{1,2,\ldots ,n\}$
  is a set of players and $v: 2^N \to \mathbb{R}$ is a characteristic function
  of the cooperative game. We further assume that $v(\emptyset) = 0$.
\end{definition}

The set of all cooperative games with a player set $N$ is denoted by $G^N$.

Subsets of $N$ are called \emph{coalitions} and $N$ itself is called the
\emph{grand coalition}.

We often write $v$ instead of $(N,v)$, because we can easily identify a game
with its characteristic function without loss of generality.

To further analyze players' gains, we will need a \emph{payoff vector} which
can be interpreted as a proposed distribution of rewards between players.

\begin{definition}
  \emph{(Payoff vector)} A payoff vector for a cooperative game $(N, v)$ is a vector
  $x \in \mathbb{R}^N$ with $x_i$ denoting the reward given to the $i$th player.
\end{definition}

\begin{definition}
  \emph{(Imputation)} An imputation of $(N,v) \in G^N$ is a vector $x \in \mathbb{R}^N$
  such that $\sum_{i \in N} x_i = v(N)$ and $x_i \ge v(\{i\})$ for every $i \in N$.

  The set of all imputations of a given cooperative game $(N,v)$ is denoted by
  $I(v)$.
\end{definition}

\begin{definition}
  \emph{(Core)} The core of $(N,v) \in G^N$ is the set
  $$C( v ) = \Bigl\{ x \in I(v); \; \sum_{ i \in S } x_i \geq v(S), \forall S \subseteq N \Bigr\}.$$
\end{definition}

There are many important classes of cooperative games. Here we show the most
important ones.

\begin{definition} \emph{(Monotonic game)} A game $(N,v)$ is monotonic if for
every $T \subseteq S \subseteq N$ we have
$$v(T) \le v(S)\textrm{.}$$
\end{definition}

Informally, in monotonic games, bigger coalitions are stronger.

\begin{definition} \emph{(Superadditive game)} A game $(N,v)$ is superadditive if for
every $S,T \subseteq N$, $S \cap T = \emptyset$ we have
$$v(T) + v(S) \le v(S \cup T)\textrm{.}$$
\end{definition}

In a superadditive game, a coalition has no incentive to divide itself since
together they will always achieve at least as much as separated.

Superadditive games are not necessarily monotonic. Conversely, monotonic games
are not necessarily superadditive. However, these classes have a nonempty
intersection. Check Caulier's paper \cite{caulier2009note} for more details on
the relationship between these two classes.

\begin{definition} \label{add} \emph{(Additive game)} A game $(N,v)$ is additive if for
every $S,T \subseteq N$, $S \cap T = \emptyset$ we have
$$v(T) + v(S) = v(S \cup T)\textrm{.}$$
\end{definition}

Observe that additive games are superadditive as well.

Another important type of game is a \emph{convex game}.

\begin{definition} \emph{(Convex game)} A game $(N,v)$ is convex if its
characteristic function is supermodular. The characteristic function
is supermodular if for every $S \subseteq T \subseteq N$,
$$v(T) + v(S) \le v(S \cup T) + v(S \cap T)\textrm{.}$$
\end{definition}

Clearly, supermodularity implies superadditivity. 

Convex games have many nice properties. We remind the most important one.

\begin{theorem} (Shapley 1971 \cite{shapley1971cores})
  If a game $(N,v)$ is convex, then its core is nonempty.
\end{theorem}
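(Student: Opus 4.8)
The plan is to produce an explicit element of the core, since nonemptiness is exactly what we must demonstrate. The natural candidate is a \emph{marginal vector}. Fix an arbitrary ordering of the players and, after relabelling, assume it is $(1,2,\ldots,n)$. Define $m \in \mathbb{R}^N$ by
$$m_i = v(\{1,\ldots,i\}) - v(\{1,\ldots,i-1\}),$$
so that $m_i$ is the worth player $i$ adds when the grand coalition is assembled one player at a time in the given order. I claim that convexity forces $m \in C(v)$, which proves the theorem.

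The single structural fact driving the argument is that supermodularity is equivalent to \emph{monotonicity of marginal contributions}: whenever $A \subseteq B \subseteq N$ and $i \notin B$,
$$v(A \cup \{i\}) - v(A) \le v(B \cup \{i\}) - v(B).$$
To see this, apply the supermodular inequality to the coalitions $A \cup \{i\}$ and $B$; their union is $B \cup \{i\}$ and, since $A \subseteq B$ and $i \notin B$, their intersection is $A$. Rearranging the resulting inequality (cancelling $v(B)$ from both sides) yields the displayed statement.

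With this in hand, the three defining conditions of the core are checked as follows. Efficiency $\sum_{i \in N} m_i = v(N)$ is immediate, since the sum telescopes and $v(\emptyset) = 0$. Individual rationality $m_i \ge v(\{i\})$ follows from the monotonicity fact applied with $A = \emptyset$ and $B = \{1,\ldots,i-1\}$. The main step, and the place where I expect the real work, is verifying the coalitional constraints $\sum_{i \in S} m_i \ge v(S)$ for every $S \subseteq N$. Here I would write $S = \{i_1 < i_2 < \cdots < i_k\}$, set $S_j = \{i_1,\ldots,i_j\}$, and apply the monotonicity fact with $A = S_{j-1}$, $B = \{1,\ldots,i_j-1\}$, and element $i_j$; since $S_{j-1} \subseteq \{1,\ldots,i_j-1\}$ and $S_{j-1} \cup \{i_j\} = S_j$, this gives $v(S_j) - v(S_{j-1}) \le m_{i_j}$. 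Summing over $j = 1,\ldots,k$ makes the left-hand side telescope to $v(S) - v(\emptyset) = v(S)$, yielding $\sum_{i \in S} m_i \ge v(S)$. Hence $m$ satisfies every core constraint, so $m \in C(v)$ and the core is nonempty. The only delicate point is the correct choice of the nested coalitions $S_{j-1} \subseteq \{1,\ldots,i_j-1\}$, so that both the monotonicity inequalities and the final sum telescope cleanly.
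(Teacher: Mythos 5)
Your proof is correct, and there is nothing in the paper to compare it against: the paper states this theorem as a cited classical result (Shapley 1971) and gives no proof at all. The argument you supply is the standard one --- essentially Shapley's own --- showing that the marginal vector $m_i = v(\{1,\ldots,i\}) - v(\{1,\ldots,i-1\})$ lies in the core. Your key lemma (supermodularity implies monotonicity of marginal contributions), the telescoping efficiency check, and the choice of nested coalitions $A = S_{j-1} \subseteq B = \{1,\ldots,i_j-1\}$ with element $i_j$ for the coalitional constraints are all sound, and the final sum telescopes exactly as you claim. One point worth flagging: your lemma applies the supermodular inequality to the pair $A \cup \{i\}$ and $B$, which are in general \emph{not} nested. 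The paper's definition of a convex game states supermodularity only for nested coalitions $S \subseteq T \subseteq N$, in which form the inequality reads $v(T)+v(S) \le v(T)+v(S)$ and is vacuous; this is evidently a typo for the standard definition quantified over all pairs $S,T \subseteq N$, and that standard definition is the one your proof (correctly) uses. So your argument is right, but be aware it rests on the intended rather than the literal form of the paper's definition.
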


\subsection{Interval analysis}

\begin{definition}
  \emph{(Interval)} An interval $X$ is a set
  $$X := [\underline{X},\overline{X}] =\{x \in \mathbb{R}: \underline{X} \le x \le \overline{X}\}\textrm{.}$$
  with $\underline{X}$ being the lower bound and $\overline{X}$ being the upper bound of the interval.
\end{definition}

From now on, by an interval, we mean a closed interval. The set of
all real intervals is denoted by $\mathbb{IR}$.

The following definition (from \cite{moore2009introduction}) shows how to do basic arithmetics with intervals.

\begin{definition}
  For every $X, Y, Z \in \mathbb{IR}$ and $0 \notin Z$ define
\begin{align*}
  X + Y &:= [\underline{X} + \underline{Y}, \overline{X} + \overline{Y}]\textrm{,}\\
  X - Y &:= [\underline{X} - \overline{Y}, \overline{X} - \underline{Y}]\textrm{,}\\
  X \cdot Y &:= [\min S , \max S],\ S = \{\underline{X}\overline{Y}, \overline{X}\underline{Y}, \underline{X}\underline{Y}, \overline{X}\overline{Y}\}\textrm{,}\\
  X\,/\,Z &:= [\min S , \max S],\ S = \{\underline{X}/\overline{Z}, \overline{X}/\underline{Z}, \underline{X}/\underline{Z}, \overline{X}/\overline{Z}\}\textrm{.}
\end{align*}
\end{definition}

\subsection{Cooperative interval games}
\label{coopgamesintro}

This section aims at presenting results on cooperative interval games necessary
to grasp our contribution to theory.

\begin{definition}
  \emph{(Cooperative interval game)} 
  A cooperative interval game is an ordered pair $(N, w)$, where $N = \{1,2,\ldots ,n\}$
  is a set of players and $w: 2^N \to \mathbb{IR}$ is the characteristic function
  of the cooperative game. We further assume that $w(\emptyset) = [0,0]$.

  The set of all interval cooperative games on a player set $N$ is denoted by $IG^N$.
\end{definition}

We often write $w(i)$ instead of $w(\{i\})$.

\begin{remark}
Each cooperative interval game in which the characteristic function maps to degenerate
intervals only can be associated with some classical cooperative game. The converse holds as well.
\end{remark}

\begin{definition} \emph{(Border games)}
  For every $(N,w) \in IG^N$, border games $(N,\underline{w}) \in G^N$ (lower border game) and $(N,\overline{w}) \in G^N$ (upper border game) are given by
  $\underline{w}(S) = \underline{w(S)}$ and $\overline{w}(S) = \overline{w(S)}$ for every $S \in 2^N$.
\end{definition}

\begin{definition} \emph{(Length game)} The length game of $(N,w) \in IG^N$ is the game $(N,|w|) \in G^N$ with
  $$|w|(S) = \overline{w}(S) - \underline{w}(S),\ \forall S \in 2^N \textrm{.}$$
\end{definition}

The basic notion of our approach will be a selection and consequently a selection imputation and a selection core.

\begin{definition} \emph{(Selection)} A game $(N,v) \in G^N$ is a selection of $(N,w)
\in IG^N$ if for every $S \in 2^N$ we have $v(S) \in w(S)$. Set of all selections
of $(N,w)$ is denoted by $\Sel(w)$.
\end{definition}

Note that border games are particular examples of selections.

\begin{definition} \emph{(Interval selection imputation)} The set of interval selection
imputations (or just selection imputations) of $(N,w) \in IG^N$ is defined as
$$\mathcal{SI}(w) = \bigcup \big\{I(v)\ |\ v \in \Sel(w) \big\} \textrm{.}$$
\end{definition}

\begin{definition} \emph{(Interval selection core)} The interval selection core (or just
selection core) of $(N,w) \in IG^N$ is defined as $$\mathcal{SC}(w) = \bigcup
\big\{C(v)\ |\ v \in \Sel(w) \big\} \textrm{.}$$ \end{definition}

Alparslan G\"{o}k \cite{gokphd} choose an approach using a weakly better
operator. That was inspired by \cite{partially}.

\begin{definition} \emph{(Weakly better operator $\succeq$)}
Interval $I$ is weakly better than interval $J$ ($I \succeq J$) if $\underline{I} \ge
\underline{J}$ and $\overline{I} \ge \overline{J}$. Furthermore, $I \preceq
J$ if and only if $\underline{I} \le \underline{J}$ and $\overline{I} \le
\overline{J}$. Interval $I$ is better than $J$ ($I \succ J$) if and only if $I
\succeq J$ and $I \not= J$.
\end{definition}

Their definition of imputation and core is as follows.

\begin{definition} \emph{(Interval imputation)} The set of interval imputations of
$(N,w) \in IG^N$ is defined as $$\mathcal{I}(w) := \Big\{ (I_1,I_2,\ldots,I_N)
\in \mathbb{IR}^N\ |\ \sum_{i\in N} I_i = w(N),\ I_i \succeq w(i),\ \forall i
\in N \Big\} \textrm{.}$$ \end{definition}

\begin{definition} \emph{(Interval core)} An interval core of $(N,w) \in IG^N$ is
defined as $$\mathcal{C}(w) := \Big\{ (I_1,I_2,\ldots,I_N) \in \mathcal{I}(w)\
|\ \sum_{i \in S}I_i \succeq w(S),\ \forall S \in 2^N \setminus \{\emptyset\}
\Big\} \textrm{.}$$ \end{definition}

An important difference between the definitions of interval and selection core and imputation is that selection concepts
yield payoff vectors from $\mathbb{R}^N$, while $\mathcal{I}$ and
$\mathcal{C}$ yield vectors from $\mathbb{IR}^N$.

 \emph{(Notation)}
Throughout the papers on cooperative interval games, notation, especially of
core and imputations, is not unified. It is, therefore, possible to encounter
different notation from ours.

Also, in these papers, the selection core is called the core of interval game. We
consider that confusing and that is why we use the term selection core instead.
The term selection imputation is used because of its connection with the selection
core.

The following classes of interval games have been studied earlier (see e.g. \cite{alparslan2009convex}).

\begin{definition} \emph{(Size monotonicity)} A game $(N,w) \in IG^N$ is size monotonic if for
every $T \subseteq S \subseteq N$ we have
$$|w|(T) \le |w|(S)\textrm{.}$$
That is, its length game is monotonic.

The class of size monotonic games on a player set $N$ is denoted by $\mathrm{SMIG}^N$.
\end{definition}

As we can see, size monotonic games capture situations in which an interval
uncertainty grows with the size of a coalition.

\begin{definition} \emph{(Superadditive interval game)} A game $(N,w) \in IG^N$ is a superadditive interval game if for
every $S,T \subseteq N$, $S \cap T = \emptyset$,
$$w(T) + w(S) \preceq w(S \cup T)\textrm{,}$$
and its length game is superadditive.
We denote by $\mathrm{SIG}^N$ the class of superadditive interval games on a player set $N$.
\end{definition}

We should be careful with the following analogy of a convex game since unlike for classical games, supermodularity
is not the same as convexity.

\begin{definition} \emph{(Supermodular interval game)} An interval game $(N,w)$ is supermodular interval
if for every $S \subseteq T \subseteq N$ holds
$$w(T) + v(S) \preceq w(S \cup T) + w(S \cap T)\textrm{.}$$
\end{definition}

We get immediately that an interval game is supermodular interval if and only if its
border games are convex.

\begin{definition} \emph{(Convex interval game)} An interval game $(N,w)$ is convex interval if its
border games and length game are convex.

We write $\mathrm{CIG}^N$ for a set of convex interval games on a player set $N$.
\end{definition}

A convex interval game is supermodular as well but the converse does not
hold in general.  See \cite{alparslan2009convex} for characterizations of convex interval games
and discussion on their properties.

\section{Selection-based classes of interval games}
\label{sec.43}

We will now introduce new classes of interval games based on the properties of
their selections. We think that it is a natural way to generalize special classes
from classical cooperative game theory. Consequently, we show their
characterizations and relation to classes from the preceding section.

\begin{definition} \emph{(Selection monotonic interval game)} An interval game
$(N,v)$ is selection monotonic if all its selections are monotonic games.
The class of such games on a player set $N$ is denoted by $\mathrm{SeMIG}^N$.
\end{definition}

\begin{definition} \emph{(Selection superadditive interval game)} An interval game
$(N,v)$ is selection superadditive if all its selections are superadditive
games. The class of such games on a player set $N$ is denoted by $\mathrm{SeSIG}^N$.
\end{definition}

\begin{definition} \emph{(Selection convex interval game)} An interval game $(N,v)$
is selection convex if all its selections are convex games. The class of such games on a player set $N$ is denoted
by $\mathrm{SeCIG}^N$.
\end{definition}

We see that many properties persist. For example, a selection convex game is a
selection superadditive as well. Selection monotonic and selection
superadditive are not subsets of each other but their intersection is
nonempty. Furthermore, the selection core of selection convex game is
nonempty, which is an easy observation.

We will now show characterizations of these three classes and consequently
show their relations to the existing classes presented in Section
\ref{coopgamesintro}.

\begin{theorem}
\label{chara}
  An interval game $(N,w)$ is selection monotonic if and only if for every $S,T \in 2^N$, $S \subsetneq T$
  $$ \overline{w}(S) \le \underline{w}(T)\textrm{.}$$
\end{theorem}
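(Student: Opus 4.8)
The plan is to prove both directions of the equivalence, treating the condition $\overline{w}(S) \le \underline{w}(T)$ for $S \subsetneq T$ as a statement about the extreme selections. The key observation is that monotonicity of a selection $v$ requires $v(S) \le v(T)$ whenever $S \subsetneq T$, and since $v(S)$ ranges freely over $[\underline{w}(S), \overline{w}(S)]$ and $v(T)$ over $[\underline{w}(T), \overline{w}(T)]$, the worst case for this inequality is when $v(S)$ is as large as possible and $v(T)$ as small as possible. So the whole theorem hinges on reducing an infinite family of conditions (one per selection) to a single comparison between $\overline{w}(S)$ and $\underline{w}(T)$.

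For the forward direction, I would assume $(N,w)$ is selection monotonic and fix $S \subsetneq T$. I would construct a single selection $v$ that simultaneously attains $v(S) = \overline{w}(S)$ and $v(T) = \underline{w}(T)$; this is possible because a selection may choose its value on each coalition independently within the corresponding interval (for coalitions other than $S$ and $T$, assign any admissible value, e.g. the lower bound, noting $w(\emptyset)=[0,0]$ forces $v(\emptyset)=0$). Since $v$ is monotonic and $S \subsetneq T$, monotonicity gives $v(S) \le v(T)$, i.e. $\overline{w}(S) \le \underline{w}(T)$, which is exactly the claimed inequality.

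For the reverse direction, I would assume the inequality holds for all $S \subsetneq T$ and take an arbitrary selection $v \in \Sel(w)$. For any $T \subseteq S \subseteq N$ (the direction in the definition of monotonic game), if $T = S$ then $v(T) \le v(S)$ is trivial, and if $T \subsetneq S$ then by the selection property $v(T) \le \overline{w}(T) \le \underline{w}(S) \le v(S)$, where the middle inequality is the hypothesis. Hence $v$ is monotonic, and since $v$ was arbitrary, $(N,w)$ is selection monotonic.

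I do not expect a serious obstacle here; the argument is a clean min-max reduction. The one point requiring care is the independence of selection values across coalitions — I must make sure that fixing $v(S)=\overline{w}(S)$ and $v(T)=\underline{w}(T)$ genuinely yields a valid selection, which it does since the definition of $\Sel(w)$ imposes only the pointwise membership constraints $v(R) \in w(R)$ with no coupling between distinct coalitions. The second mild subtlety is matching the index convention: the theorem states the condition for $S \subsetneq T$, whereas the definition of a monotonic game is phrased for $T \subseteq S$, so I would keep the roles of the smaller and larger coalition explicit throughout to avoid confusion.
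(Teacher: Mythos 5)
Your proof is correct and follows essentially the same approach as the paper: both directions rest on the extremal selection with $v(S)=\overline{w}(S)$, $v(T)=\underline{w}(T)$ (you phrase the forward direction directly where the paper argues by contradiction, a cosmetic difference), and the reverse direction uses the same chain $v(T) \le \overline{w}(T) \le \underline{w}(S) \le v(S)$. Your explicit remark that selection values may be chosen independently across coalitions is a point the paper leaves implicit, but it does not change the argument.
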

\begin{proof}
For the ``only if'' part, suppose that $(N,w)$ is a selection monotonic
and $\overline{w}(S) > \underline{w}(T)$ for some $S,T \in 2^N$, $S \subsetneq T$.
Then selection $(N,v)$ with $v(S) = \overline{w}(S)$ and $v(T) = \underline{w}(T)$ clearly
violates monotonicity and we arrive at a contradiction.

Now for the ``if'' part. For any two subsets $S,T$ of $N$, one of the situations $S \subsetneq T$, $T \subsetneq S$ or $S = T$ occurs.
For $S = T$, in every selection $v$, $v(S) \le v(S)$ holds. As for the other two situations, it is obvious
that monotonicity cannot be violated as well since $v(S) \le \overline{w}(S) \le \underline{w}(T) \le v(T)$.
\qed \end{proof}

Notice the importance of using $S \subsetneq T$ in the formulation of Theorem
\ref{chara}. That is because using of $S \subseteq T$ (thus allowing situation
$S = T$) would imply $\overline{w}(S) \le \underline{w}(S)$ for every $S$ in
selection monotonic game which is obviously not true in general. In
characterizations of selection superadditive and selection convex games, a
similar situation arises.

\begin{theorem} \label{supchar}
  An interval game $(N,w)$ is selection superadditive if and only if for every $S,T \in 2^N$ such that $S \cap T = \emptyset$, $S \not= \emptyset$, $T \not= \emptyset$
  $$ \overline{w}(S) + \overline{w}(T) \le \underline{w}(S \cup T)\textrm{.}$$
\end{theorem}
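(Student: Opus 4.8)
The plan is to follow the same two-directional structure as the proof of Theorem \ref{chara}, exploiting the fact that a selection may assign to each coalition $U$ an arbitrary value in $w(U)$, independently of the other coalitions.

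For the ``only if'' direction I would argue by contraposition, as in the characterization of selection monotonicity. Suppose the inequality fails for some disjoint nonempty $S,T$, that is $\overline{w}(S) + \overline{w}(T) > \underline{w}(S \cup T)$. I would then exhibit a single selection $(N,v)$ witnessing a violation of superadditivity by setting $v(S) = \overline{w}(S)$, $v(T) = \overline{w}(T)$, and $v(S \cup T) = \underline{w}(S \cup T)$, with the remaining coalitions taking any admissible value (say their lower bounds). The point requiring verification is that $S$, $T$, and $S \cup T$ are three pairwise distinct coalitions, which follows from $S$ and $T$ being nonempty and disjoint; hence these assignments do not conflict and $v$ is a legitimate selection with $v(\emptyset)=0$. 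Then $v(S) + v(T) > v(S \cup T)$ contradicts selection superadditivity.

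For the ``if'' direction I would take an arbitrary selection $(N,v)$ and arbitrary disjoint $S,T \subseteq N$ and verify superadditivity directly, splitting into cases. When both $S$ and $T$ are nonempty, chaining the selection bounds with the hypothesis gives $v(S) + v(T) \le \overline{w}(S) + \overline{w}(T) \le \underline{w}(S \cup T) \le v(S \cup T)$, as required. When one of them is empty, say $S = \emptyset$, the condition reduces to $v(T) \le v(T)$ using $v(\emptyset) = 0$ and $S \cup T = T$.

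I do not expect a serious obstacle. The only point demanding care is the case distinction in the ``if'' part: the hypothesis is stated only for nonempty $S,T$, so the empty-coalition cases must be dispatched separately, which is exactly the analogue of the role played by $S \subsetneq T$ in Theorem \ref{chara} and is flagged in the remark following it.
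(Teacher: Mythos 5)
Your proposal is correct and matches the paper's intended argument: the paper proves this theorem simply by saying it is ``similar to the proof of Theorem \ref{chara}'', and your two-directional structure (a violating selection built from upper/lower bounds for the ``only if'' part, chaining $v(S)+v(T) \le \overline{w}(S)+\overline{w}(T) \le \underline{w}(S\cup T) \le v(S\cup T)$ plus the empty-coalition case split for the ``if'' part) is exactly that adaptation. You even make explicit the two points the paper leaves implicit, namely that $S$, $T$, $S\cup T$ are pairwise distinct so the selection is well defined, and that the empty-coalition cases must be handled separately.
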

\begin{proof}
Similar to the proof of Theorem \ref{chara}.
\qed \end{proof}

We give a characterization of selection convex games as well:

\begin{theorem} \label{conchar}
  An interval game $(N,w)$ is selection convex if and only if for every $S,T \in 2^N$ such that $S \not\subseteq T$, $T \not\subseteq S$, $S \not= \emptyset$, $T \not= \emptyset$ holds
  $$ \overline{w}(S) + \overline{w}(T) \le \underline{w}(S \cup T) + \underline{w}(S \cap T)\textrm{.}$$
\end{theorem}
\begin{proof}
Similar to proof of Proposition \ref{chara}.
\end{proof}

Now let us look at a relation with existing classes of interval games.

For selection monotonic and size monotonic games, their relation is obvious.
For nontrivial games, i.e.\ games with the size of player set greater than
one, a selection monotonic game is not necessarily size monotonic and vice
versa.

\begin{theorem}
\label{superadditive}
For every player set $N$ with $|N| > 1$, the following assertions hold.
\begin{enumerate}[(i)]
  \item $\mathrm{SeSIG}^N \not\subseteq \mathrm{SIG}^N\textrm{.}$
  \item $\mathrm{SIG}^N \not\subseteq \mathrm{SeSIG}^N\textrm{.}$
  \item $\mathrm{SeSIG}^N \cap\, \mathrm{SIG}^N \not= \emptyset\textrm{.}$
\end{enumerate}
\end{theorem}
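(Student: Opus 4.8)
The plan is to prove (i) and (ii) by exhibiting explicit counterexamples and (iii) by displaying a single game lying in both classes, using the characterization of $\mathrm{SeSIG}^N$ from Theorem \ref{supchar} together with the definition of $\mathrm{SIG}^N$. Recall that membership in $\mathrm{SIG}^N$ amounts to three conditions: superadditivity of $\underline{w}$, superadditivity of $\overline{w}$, and superadditivity of the length game $|w|$. The first fact I would record is that the selection-superadditivity inequality $\overline{w}(S)+\overline{w}(T)\le\underline{w}(S\cup T)$ of Theorem \ref{supchar} already forces both border games to be superadditive: combine it with $\underline{w}(S\cup T)\le\overline{w}(S\cup T)$ to get superadditivity of $\overline{w}$, and with $\underline{w}(S)\le\overline{w}(S)$, $\underline{w}(T)\le\overline{w}(T)$ to get superadditivity of $\underline{w}$. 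Hence the only way a selection superadditive game can fail to lie in $\mathrm{SIG}^N$ is through the length game, and this pinpoints exactly what a counterexample for (i) must do.

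For (i) I would therefore make the individual intervals wide and the larger coalitions narrow so that $|w|$ fails superadditivity, while keeping the lower bounds growing fast enough to preserve the gap inequality. Concretely I would take $w(\emptyset)=[0,0]$, $w(\{i\})=[1,2]$ for every singleton, and $w(S)=[|S|^2,|S|^2]$ for every $S$ with $|S|\ge 2$. Then $|w|(\{1\})+|w|(\{2\})=2>0=|w|(\{1,2\})$, so the length game is not superadditive and the game is not in $\mathrm{SIG}^N$; checking the gap inequality for all disjoint nonempty pairs $S,T$ reduces to $\overline{w}(S)+\overline{w}(T)\le(|S|+|T|)^2$, which I would verify by splitting into the three size-cases (two singletons, exactly one singleton, no singleton).

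For (ii) I would do the opposite: set $\underline{w}(S)=0$ for all $S$, which makes $\underline{w}$ trivially superadditive and makes the gap inequality easy to violate, and set $\overline{w}(S)=|S|$. Since $\overline{w}$ is additive, both $\overline{w}$ and the length game $|w|=\overline{w}$ are superadditive, so the game lies in $\mathrm{SIG}^N$; but $\overline{w}(\{1\})+\overline{w}(\{2\})=2>0=\underline{w}(\{1,2\})$ violates Theorem \ref{supchar}, so it is not selection superadditive. For (iii) the cleanest witness is any degenerate interval game obtained from a superadditive classical game $v$ (for instance $v(S)=|S|$): its length game is identically zero and both border games equal $v$, so all three $\mathrm{SIG}^N$-conditions hold, while the gap inequality reduces to superadditivity of $v$, placing the game in $\mathrm{SeSIG}^N$ as well.

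All three witnesses use only the two players $1,2$ to force the strict inequalities, which is precisely why the hypothesis $|N|>1$ is needed to guarantee a disjoint pair of singletons. The only genuine work is routine: confirming that the gap inequality of Theorem \ref{supchar} holds for every disjoint nonempty pair in the examples for (i) and (iii), which is where the size-based case analysis lives. That case-checking — ensuring the constructions satisfy the required inequalities uniformly for all coalition sizes rather than only for the witnessing pair — is the main, though entirely elementary, obstacle.
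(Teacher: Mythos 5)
Your proposal is correct and takes essentially the same route as the paper: verify the characterization of Theorem \ref{supchar} on explicit counterexamples for (i) and (ii), and exhibit a degenerate interval game built from a superadditive classical game for (iii). Your example for (ii) is exactly the paper's $w'(S)=[0,|S|]$; your example for (i) differs from the paper's $w(S)=[2|S|-2,2|S|-1]$ but exploits the same length-game failure, and your preliminary observation that selection superadditivity already forces both border games to be superadditive (so only the length game can fail) is a nice structural point the paper leaves implicit.
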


\begin{proof} 
In \emph{(i)},  we can construct the counterexample in the following way.

Let us construct game $(N,w)$.
For $w(\emptyset)$, the interval is given. Now for any nonempty coalition,
set $w(S) := [2|S|-2,2|S|-1]$. For any $S, T \in 2^N$ with $S$ and $T$ being nonempty and disjoint, the following holds
with the fact that $|S| + |T| = |S \cup T|$ taken into account.
\begin{align*}
\overline{w}(S) + \overline{w}(T) &= (2|S| - 1) + (2|T| - 1)\\
&= 2|S \cup T| - 2\\
&= \underline{w}(S \cup T)
\end{align*}
So $(N,w)$ is selection superadditive by Theorem \ref{supchar}. Its length game, however, is not
superadditive since for any two nonempty coalitions with empty
intersection $|w|(S) + |w|(T) = 2 \not\le 1 = |w|(S \cup T)$ holds.

In \emph{(ii)}, we can construct the following counterexample $(N, w')$. Set
$w'(S) = [0, |S|]$ for any nonempty $S$. The lower border game is trivially
superadditive. For the upper game, $\overline{w'}(S) +
\overline{w'}(T) = |S| + |T| = |S \cup T| = \overline{w'}(S \cup T)$ for any
$S, T$ with empty intersection, so the upper game is superadditive. Observe
that the length game is the same as the upper border game. This shows interval
superadditivity.

However, $(N, w')$ is clearly not selection superadditive because of nonzero
upper bounds, zero lower bounds of nonempty coalitions and the
characterization of $\mathrm{SeSIG}^N$ taken into account.

\emph{(iii)} Nonempty intersection can be argued easily by taking some superadditive game
$(N,c) \in G^N$. Then we can define corresponding game $(N,d) \in IG^N$ with $$d(S) =
[c(S),c(S)],\quad \forall S \in 2^N\textrm{.}$$ Game
$(N,d)$ is selection superadditive since its only selection is $(N,c)$. And it is
superadditive interval game since border games are supermodular and length game is
$|w|(S) = 0$ for every coalition, which trivially implies its superadditivity.
\qed \end{proof}

\begin{theorem}
\label{conv}
For every player set $N$ with $|N| > 1$, the following assertions hold.
\begin{enumerate}[(i)]
  \item $\mathrm{SeCIG}^N \not\subseteq \mathrm{CIG}^N\textrm{.}$
  \item $\mathrm{CIG}^N \not\subseteq \mathrm{SeCIG}^N\textrm{.}$
  \item $\mathrm{SeCIG}^N \cap\, \mathrm{CIG}^N \not= \emptyset\textrm{.}$
\end{enumerate}
\end{theorem}
\begin{proof}
For \emph{(i)}, take a game $(N,w)$ assigning to each nonempty coalition $S$ interval $[2^{|S|}-2, 2^{|S|}-1]$. 
From Theorem \ref{conchar}, we get that for inequalities which must hold in order to meet necessary conditions of game
to be selection convex, $|S| < |S \cup T|$ and $|T| < |S \cup T|$ must hold. That gives the following inequality:
\begin{align*}
\overline{w}(S) + \overline{w}(T) &\le (2^{|S \cup T| - 1} - 1) + (2^{|S \cup T| - 1} - 1)\\
&= 2^{|S \cup T|} - 2\\
&= \underline{w}(S \cup T)\\
&\le \underline{w}(S \cup T) + \underline{w}(S \cap T)
\end{align*}
This concludes that $(N,w)$ is selection convex. We see that the border games and the length game are convex too. To have a game
so that it is selection convex and not convex interval, we can take $(N, c)$ and set $c(S) := w(S)$ for $S \not= N$ and $c(N) := [\underline{w}(N),\underline{w}(N)]$.
Now the game $(N, c)$ is still selection convex, but its length game is not convex and $(N,v)$ is not a convex interval game, which is what we wanted.

In \emph{(ii)}, we can take a game $(N,w')$ from the proof of Theorem \ref{superadditive}\emph{(ii)}. From the fact that $|S| + |T| = |S \cup T| + |S \cap T|$,
it is clear that $\overline{w'}$ is convex. The lower border game is trivially convex and the length game is the same as upper. However,
for nonempty $S, T \in 2^N$ such that $S \not\subseteq T$, $T \not\subseteq S$, $S \not= \emptyset$, $T \not= \emptyset$, convex selection
games characterization is clearly violated.

As for \emph{(iii)}, we can use the same steps as in \emph{(iii)}
of Theorem \ref{superadditive} or we can use a game $(N,w)$ from \emph{(i)} of this theorem.
\qed \end{proof}

\section{Core coincidence}
\label{sec.4.core} \numberwithin{equation}{section}

In Alparslan-G\"{o}k's PhD thesis \cite{gokphd} and \cite{alparslan2011set}, the following question is suggested:
\begin{quote}
\emph{``A
difficult topic might be to analyze under which conditions the set of payoff
vectors generated by the interval core of a cooperative interval game
coincides with the core of the game in terms of selections of the interval
game.''}
\end{quote}

We decided to examine this topic. We call it the \emph{core coincidence
problem}. This section shows our results.

We remind the reader that whenever we talk about a relation and maximum,
minimum, maximal, minimal vectors, we mean the relation $\le$ on real vectors
unless we say otherwise.

The main thing to notice is that while the interval core gives us a set of
interval vectors, selection core gives us a set of real numbered vectors. To
be able to compare them, we need to assign to a set of interval vectors a set
of real vectors generated by these interval vectors. That is exactly what the
following function $\gen$ does.

\begin{definition} The function $\gen : 2^{\mathbb{IR}^N} \to 2^{\mathbb{R}^N}$ maps to every set of interval vectors a set of real vectors. It is defined as
$$\gen(S) = \bigcup_{s \in S} \big\{(x_1,x_2,\ldots,x_n)\ |\ x_i \in s_i\big\}\textrm{.}$$
\end{definition}

The core coincidence problem can be formulated as this: What are the necessary
and sufficient conditions to satisfy $\gen(\mathcal{C}(w)) = \mathcal{SC}(w)$?

The main results of this section are two theorems which can be seen as a partial
step towards an answer to the core coincidence problem.

In the following text by mixed system, we mean a system of equalities and
inequalities.

\begin{theorem} \label{temp}
For every interval game $(M,w)$ we have $\gen(\mathcal{C}(w)) \subseteq \mathcal{SC}(w)$.
\end{theorem}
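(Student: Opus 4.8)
The plan is to prove the inclusion pointwise: I would take an arbitrary real vector $x \in \gen(\mathcal{C}(w))$ and exhibit a single selection $v \in \Sel(w)$ whose classical core $C(v)$ contains $x$. Since $C(v) \subseteq \mathcal{SC}(w)$ by the definition of the selection core, this immediately yields $x \in \mathcal{SC}(w)$, and the inclusion follows.

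By the definition of $\gen$, the vector $x$ comes with a witnessing interval vector $(I_1,\dots,I_N) \in \mathcal{C}(w)$ satisfying $x_i \in I_i$, i.e.\ $\underline{I_i} \le x_i \le \overline{I_i}$ for every player $i$. The next step is to record what membership in the interval core gives, read off coordinatewise through the weakly better operator: from $\sum_{i \in N} I_i = w(N)$ we get $\sum_{i \in N}\underline{I_i} = \underline{w}(N)$ and $\sum_{i \in N}\overline{I_i} = \overline{w}(N)$; from $I_i \succeq w(i)$ we get $\underline{I_i} \ge \underline{w}(i)$; and from $\sum_{i\in S} I_i \succeq w(S)$ we get $\sum_{i \in S}\underline{I_i} \ge \underline{w}(S)$ for every nonempty $S$.

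The heart of the argument is the construction of $v$. I would set $v(N) := \sum_{i \in N} x_i$ and $v(S) := \underline{w}(S)$ for every nonempty $S \ne N$ (with $v(\emptyset) = 0$). To check $v \in \Sel(w)$, the only nontrivial point is $v(N) \in w(N)$: summing $\underline{I_i} \le x_i \le \overline{I_i}$ over all players and using the two equalities above gives $\underline{w}(N) \le \sum_i x_i \le \overline{w}(N)$, as required; the remaining values are lower endpoints and so lie in their intervals automatically. To check $x \in C(v)$, the imputation conditions hold because $\sum_i x_i = v(N)$ by construction and $x_i \ge \underline{I_i} \ge \underline{w}(i) = v(\{i\})$, while the core inequalities for nonempty $S \ne N$ follow from $\sum_{i\in S} x_i \ge \sum_{i\in S}\underline{I_i} \ge \underline{w}(S) = v(S)$, and for $S = N$ the inequality is an equality.

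Essentially no real obstacle remains once the selection is chosen well: the proof hinges on the observation that assigning every proper nonempty coalition its lower endpoint $\underline{w}(S)$ turns each core inequality of $v$ into a lower-endpoint projection of the corresponding interval core inequality, which the witness already satisfies. The only delicate bookkeeping is confirming that the grand-coalition value $\sum_i x_i$ genuinely lands inside $w(N)$, which is exactly where the equality $\sum_i I_i = w(N)$ is used; the degenerate case $|N| = 1$ is subsumed, since then $v(N) = x_1$ forces the imputation condition to hold with equality.
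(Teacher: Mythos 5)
Your proof is correct and follows essentially the same route as the paper: the paper builds an auxiliary interval game $s$ with $s(N)=\bigl[\sum_{i\in N} x_i,\sum_{i\in N} x_i\bigr]$ and suitably truncated intervals for the other coalitions, and your selection $v$ (grand coalition gets $\sum_{i\in N} x_i$, every proper nonempty coalition its lower endpoint $\underline{w}(S)$) is precisely one selection of that auxiliary game. Your write-up is in fact more explicit than the paper's, since you spell out how the interval-core membership of the witness $(I_1,\ldots,I_N)$ yields the lower-endpoint inequalities that make both $v \in \Sel(w)$ and $x \in C(v)$ hold.
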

\begin{proof}
  For any $x \in \gen(\mathcal{C}(w))$, the inequality $\underline{w}(N) \le \sum_{i \in N}x_i \le \overline{w}(N)$ obviously holds.
  Furthermore, $x$ is in the core for any selection of the interval game $(N, s)$ with $s$ given by
  $$s(S) = 
  \begin{cases}
    \big[\sum_{i \in N}x_i,\sum_{i \in N}x_i \big] \textrm{ if } S = N,\\
    \big[ \underline{w}(S), \min(\sum_{i \in S}x_i, \overline{w}(S)) \big] \textrm{ otherwise.}
  \end{cases}$$
  Clearly, $\Sel(s) \subseteq \Sel(w)$ and $\Sel(s) \not= \emptyset$. Therefore $\gen(\mathcal{C}(w)) \subseteq \mathcal{SC}(w)$.
\qed \end{proof}

\begin{theorem} \label{incidence} \emph{(Core coincidence characterization)}
For every interval game $(N,w)$ we have $\gen(\mathcal{C}(w)) = \mathcal{SC}(w)$ if and only if
for every $x \in \mathcal{SC}(w)$ there exist nonnegative vectors $l^{(x)}$ and $u^{(x)}$ such that
\begin{alignat}{3}
  &\sum_{i \in N}(x_i - l^{(x)}_i) &&= \underline{w}(N)\textrm{,}\\ 
  &\sum_{i \in N}(x_i + u^{(x)}_i) &&= \overline{w}(N)\textrm{,}\\
  &\sum_{i \in S}(x_i - l^{(x)}_i) &&\ge \underline{w}(S),\ \forall S \in 2^N \setminus \{\emptyset\}\textrm{,}\\ 
  &\sum_{i \in S}(x_i + u^{(x)}_i) &&\ge \overline{w}(S),\ \forall S \in 2^N \setminus \{\emptyset\}\textrm{.}
\end{alignat}
\end{theorem}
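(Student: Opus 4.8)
The plan is to leverage Theorem~\ref{temp}, which already supplies the inclusion $\gen(\mathcal{C}(w)) \subseteq \mathcal{SC}(w)$ unconditionally. Consequently the asserted equivalence reduces to showing that the reverse inclusion $\mathcal{SC}(w) \subseteq \gen(\mathcal{C}(w))$ holds precisely when every $x \in \mathcal{SC}(w)$ admits nonnegative vectors $l^{(x)}, u^{(x)}$ satisfying the four displayed conditions. Thus the whole argument hinges on a pointwise membership criterion: for a fixed real vector $x$, characterize when $x \in \gen(\mathcal{C}(w))$.

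First I would unfold the definition of $\gen$: the condition $x \in \gen(\mathcal{C}(w))$ means there exists an interval vector $(I_1,\ldots,I_n) \in \mathcal{C}(w)$ with $x_i \in I_i$ for every $i \in N$. The key observation is that $x_i \in I_i = [\underline{I_i},\overline{I_i}]$ is equivalent to writing $\underline{I_i} = x_i - l_i$ and $\overline{I_i} = x_i + u_i$ for some $l_i, u_i \ge 0$; this substitution is a bijection between intervals containing $x_i$ and pairs of nonnegative reals $(l_i,u_i)$.

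Next I would rewrite the interval-core constraints in terms of $l$ and $u$ by expanding the interval arithmetic. The equality $\sum_{i\in N} I_i = w(N)$ splits into its lower and upper endpoints, yielding the two equalities of the statement. The coalition constraints $\sum_{i \in S} I_i \succeq w(S)$ split, via the definition of $\succeq$, into the family of lower-endpoint inequalities and the family of upper-endpoint inequalities. The imputation requirement $I_i \succeq w(i)$ is merely the singleton case $S = \{i\}$ of these inequalities, hence already subsumed. Therefore $x \in \gen(\mathcal{C}(w))$ if and only if the displayed system admits a nonnegative solution $(l^{(x)}, u^{(x)})$; the forward implication reads off $l_i, u_i$ from any containing interval, while the converse sets $I_i := [x_i - l_i, x_i + u_i]$ and checks it lies in $\mathcal{C}(w)$.

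Finally I would assemble the pieces: quantifying this membership criterion over all $x \in \mathcal{SC}(w)$ turns $\mathcal{SC}(w) \subseteq \gen(\mathcal{C}(w))$ into exactly the stated condition, and combining with Theorem~\ref{temp} yields $\gen(\mathcal{C}(w)) = \mathcal{SC}(w)$. I do not expect a serious obstacle here: the entire content lies in choosing the parametrization $I_i = [x_i - l_i, x_i + u_i]$ and faithfully translating each interval relation into its pair of real constraints. The only points demanding care are verifying that the endpoint-wise expansion of $\sum_{i \in S} I_i$ matches the left-hand sides in the statement and that the imputation condition $I_i \succeq w(i)$ genuinely contributes nothing beyond the singleton coalition inequalities.
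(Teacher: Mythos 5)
Your proposal is correct and follows essentially the same route as the paper's own proof: invoke Theorem~\ref{temp} for the inclusion $\gen(\mathcal{C}(w)) \subseteq \mathcal{SC}(w)$, then parametrize intervals containing $x$ as $[x_i - l^{(x)}_i, x_i + u^{(x)}_i]$ with $l^{(x)}, u^{(x)} \ge 0$ and observe that membership of this interval vector in $\mathcal{C}(w)$ is exactly the mixed system $(4.1)$--$(4.4)$. In fact you spell out the endpoint-splitting of the interval sum, the decomposition of $\succeq$, and the subsumption of the imputation constraints by the singleton coalitions more explicitly than the paper does.
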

\begin{proof}
  First, we observe that with Theorem \ref{temp} taken into account, we only need
  to take care of $\gen(\mathcal{C}(w)) \supseteq \mathcal{SC}(w)$ to obtain equality.

  For $\gen(\mathcal{C}(w)) \supseteq \mathcal{SC}(w)$, suppose we have some $x \in \mathcal{SC}(w)$.
  For this vector, we need to find some interval $X \in \mathcal{C}(w)$ such that $x \in \gen({X})$. This is equivalent to
  the task of finding two nonnegative vectors $l^{(x)}$ and $u^{(x)}$ such that
  $$([x_1 - l^{(x)}_1,x_1 + u^{(x)}_1]),[x_2 - l^{(x)}_2, x_2 + u^{(x)}_2],\ldots ,[x_n - l^{(x)}_n, x_n + u^{(x)}_n]) \in \mathcal{C}(w)\textrm{.}$$
  From the definition of interval core, we can see that these two vectors have to satisfy exactly the mixed system $(4.1)-(4.4)$. That completes the proof.
\qed \end{proof}

\begin{example}
Consider an interval game with $N = \{1,2\}$ and $w(\{1\}) =
w(\{2\}) = [1,3]$ and $w(N) = [1,4]$. Then vector $(2,2)$ lies in the core of
the selection with $v(\{1\}) = v(\{2\}) = 2$ and $v(N) = 4$. However, to satisfy
equation $(4.1)$, we need to have $\sum_{i \in N} l_i = 3$ which means that
either $l_1$ or $l_2$ has to be greater than $1$. That means we cannot satisfy
$(4.3)$ and we conclude that $\gen(\mathcal{C}(w)) \not= \mathcal{SC}(w)$.
\end{example}

The following theorem shows that it suffices to check only minimal and maximal
vectors of $\mathcal{SC}(w)$.

\begin{theorem}
For every interval game $(N,w)$, if there exist vectors $q, r, x \in
\mathbb{R}^N$ such that $q,r \in \gen(\mathcal{C}(w))$ and  $q_i \le x_i \le r_i$ for every $i \in N$, then $x \in \gen(\mathcal{C}(w))$.
\end{theorem}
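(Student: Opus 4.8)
The statement asserts a kind of convexity/monotone-interval-closure property of $\gen(\mathcal{C}(w))$: if two points $q \le r$ (coordinatewise) both lie in the generated set, so does any $x$ sandwiched between them. The natural strategy is to exploit the characterization in Theorem~\ref{incidence} together with Theorem~\ref{temp}. Since $q, r \in \gen(\mathcal{C}(w))$, each arises from some interval core vector: there are nonnegative vectors $l^{(q)}, u^{(q)}$ and $l^{(r)}, u^{(r)}$ witnessing membership via the mixed system $(4.1)$--$(4.4)$ (read with $q$, resp.\ $r$, in place of $x$). The plan is to build explicit witness vectors $l^{(x)}, u^{(x)}$ for the intermediate point $x$ directly from these, thereby exhibiting an interval core element whose generated box contains $x$.

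**Key steps.** First I would record what membership of $q$ and $r$ gives us concretely. For $q$ there is an interval vector $Q = ([q_i - l^{(q)}_i,\, q_i + u^{(q)}_i])_{i\in N} \in \mathcal{C}(w)$, and similarly $R = ([r_i - l^{(r)}_i,\, r_i + u^{(r)}_i])_{i\in N} \in \mathcal{C}(w)$. Writing out the interval-core conditions, this means the lower endpoints $a_i := q_i - l^{(q)}_i$ and $a'_i := r_i - l^{(r)}_i$ each satisfy the lower-border core inequalities of $(N,\underline{w})$, and the upper endpoints $b_i := q_i + u^{(q)}_i$ and $b'_i := r_i + u^{(r)}_i$ each satisfy the upper-border core inequalities of $(N,\overline{w})$. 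The second step is to define candidate endpoints for $x$: set the lower endpoint of the $i$th interval to $\min(a_i, a'_i)$ and the upper endpoint to $\max(b_i, b'_i)$, equivalently choose $l^{(x)}_i$ and $u^{(x)}_i$ so that $x_i - l^{(x)}_i = \min(a_i,a'_i) \le x_i$ and $x_i + u^{(x)}_i = \max(b_i,b'_i) \ge x_i$; nonnegativity of $l^{(x)}, u^{(x)}$ holds precisely because $q_i \le x_i \le r_i$ wedges $x$ between the two original points. The third step is to verify that this enlarged box still meets the four core conditions of Theorem~\ref{incidence}. The inequalities $(4.3)$ and $(4.4)$ are preserved because taking coordinatewise $\min$ of two lower endpoints only decreases the left-hand sums while each summand individually dominates the corresponding endpoint from one valid witness, and symmetrically the $\max$ on the upper side only increases sums—so each border-game core inequality that held for $q$ (or $r$) continues to hold.

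**Main obstacle.** The equalities $(4.1)$ and $(4.2)$ over the grand coalition are the delicate point, and taking raw coordinatewise $\min$/$\max$ will generically overshoot them: $\sum_i \min(a_i,a'_i)$ may be strictly less than $\underline{w}(N)$ and $\sum_i \max(b_i,b'_i)$ strictly more than $\overline{w}(N)$, breaking the grand-coalition balance. So the real work is a \emph{repair step}: after forming the enlarged box one must shrink the lower endpoints back upward (and the upper endpoints back downward) by a total of the excess slack, distributing the correction across coordinates while never pushing a lower endpoint above $x_i$ nor an upper endpoint below $x_i$, and never violating the already-established proper-coalition inequalities $(4.3)$, $(4.4)$. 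I expect to argue that enough slack is available to do this—intuitively, because $q$ and $r$ are themselves balanced at the grand coalition, a suitable convex-combination or greedy redistribution of the endpoint corrections lands exactly on $\underline{w}(N)$ and $\overline{w}(N)$ while respecting all constraints. Making this redistribution precise, and checking it does not disturb $(4.3)$/$(4.4)$, is the crux; once it is done, the resulting interval vector lies in $\mathcal{C}(w)$ and contains $x$ in its generated box, so $x \in \gen(\mathcal{C}(w))$ by definition.
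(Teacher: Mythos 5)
Your overall strategy---extract witness vectors $l^{(q)},u^{(q)},l^{(r)},u^{(r)}$ from the memberships of $q$ and $r$ via Theorem~\ref{incidence} and assemble from them witnesses for $x$---is the same as the paper's, but your specific construction fails, and the step you defer is precisely where it breaks. First, the claim that the coordinatewise $\min$/$\max$ box preserves $(4.3)$ and $(4.4)$ is false. Writing $a_i = q_i - l^{(q)}_i$ and $a'_i = r_i - l^{(r)}_i$, the sum $\sum_{i\in S}\min(a_i,a'_i)$ can be strictly smaller than both $\sum_{i\in S}a_i$ and $\sum_{i\in S}a'_i$, hence can drop below $\underline{w}(S)$: with three players, $\underline{w}(\{1,2\})=2$, $\underline{w}(N)=2$ and all other coalitions $0$, the vectors $a=(0,2,0)$ and $a'=(2,0,0)$ both satisfy every lower-core inequality, yet $\min(a,a')=(0,0,0)$ violates the constraint for $S=\{1,2\}$; such $a$, $a'$ do arise as witnesses for suitable $q\le r$ (e.g.\ $q=(0,2,0)$, $r=(2,2,2)$ with $w(N)=[2,6]$), so this is not a vacuous worry. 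Second, the repair step you yourself identify as ``the crux''---redistributing slack to restore the grand-coalition equalities $(4.1)$, $(4.2)$ without disturbing anything else---is never carried out, only conjectured to be possible. Note that the natural candidate, a convex combination $\lambda a_i + (1-\lambda)a'_i$ of lower endpoints, also fails: $a'_i$ need only satisfy $a'_i\le r_i$, not $a'_i\le x_i$, so the resulting $l^{(x)}_i$ can be negative. As written, the proposal is therefore not a proof.

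The difficulty you ran into is self-inflicted: no enlargement and no repair are needed if you mix the two witnesses \emph{asymmetrically} rather than coordinatewise. Take all lower endpoints from $q$'s witness and all upper endpoints from $r$'s witness: set $l^{(x)}_i = (x_i - q_i) + l^{(q)}_i$ and $u^{(x)}_i = (r_i - x_i) + u^{(r)}_i$, both nonnegative exactly because $q_i \le x_i \le r_i$. Then $x_i - l^{(x)}_i = q_i - l^{(q)}_i$ and $x_i + u^{(x)}_i = r_i + u^{(r)}_i$, so $(4.1)$ and $(4.3)$ hold verbatim because they hold for $q$'s witness, while $(4.2)$ and $(4.4)$ hold verbatim because they hold for $r$'s witness; the sums never mix the two witnesses, so nothing can overshoot or undershoot. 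This is exactly the paper's proof, and it shows the ``main obstacle'' in your write-up disappears once the right assembly is chosen.
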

\begin{proof}
Let $l^{(r)}, u^{(r)}, l^{(q)}, u^{(q)}$ be the corresponding vectors in sense of Theorem \ref{incidence}.
We need to find vectors $l^{(x)}$ and $u^{(x)}$ satisfying $(4.1)-(4.4)$ of Theorem \ref{incidence}.

Let's define vectors $dq, dr \in \mathbb{R}^N$:
$$dq_i = x_i - q_i\textrm{,}$$
$$dr_i = r_i - x_i\textrm{.}$$

Finally, we define $l^{(x)}$ and $u^{(x)}$ in this way:
\begin{align*}
l^{(x)}_i &= dq_i + l^{(q)}_i\textrm{,} \\
u^{(x)}_i &= dr_i + u^{(r)}_i\textrm{.}
\end{align*}

We need to check that we satisfy $(4.1)-(4.4)$ for $x$, $l^{(x)}$ and $u^{(x)}$
We will show only $(4.2)$ since remaining ones can be done in a similar way.
\begin{align*}
  \sum_{i \in N}(x_i - l^{(x)}_i) &= \sum_{i \in N}(x_i - dq_i - l^{(q)}_i) \\
  &= \sum_{i \in N}(x_i - x_i + q_i - l^{(q)}_i) \\
  &= \sum_{i \in N}(q_i - l^{(q)}_i) \\
  &= \underline{w}(N)\textrm{.}
\end{align*}
\vskip -2em
\qed \end{proof}

For games with additive border games (see Definition \ref{add}) we get the
following result.

\begin{theorem} \label{check} For an interval game $(N,w)$ with additive border games, the payoff
vector $(\underline{w}(1),\underline{w}(2),\ldots,\underline{w}(n)) \in \gen(\mathcal{C}(w))$.
\end{theorem}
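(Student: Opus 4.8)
The plan is to directly exhibit an element of the interval core whose generated real vectors contain $x := (\underline{w}(1),\ldots,\underline{w}(n))$, rather than routing through the characterization of Theorem \ref{incidence}. The starting observation is that additivity of the border games means $\underline{w}(S) = \sum_{i \in S}\underline{w}(i)$ and $\overline{w}(S) = \sum_{i \in S}\overline{w}(i)$ for every $S \in 2^N$, since an additive game is completely determined by its values on singletons.

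The natural candidate is the interval payoff vector $X = (w(1), w(2), \ldots, w(n))$ whose $i$th component is simply the worth interval $w(i)$ of player $i$. First I would check that $X \in \mathcal{I}(w)$: by the definition of interval addition, $\sum_{i \in N} X_i = [\sum_{i\in N}\underline{w}(i),\, \sum_{i \in N}\overline{w}(i)]$, which equals $[\underline{w}(N), \overline{w}(N)] = w(N)$ precisely because both border games are additive, while $X_i = w(i) \succeq w(i)$ holds trivially. Next I would verify the core inequalities: for every nonempty $S \subseteq N$, interval addition together with additivity again gives $\sum_{i \in S} X_i = [\underline{w}(S), \overline{w}(S)] = w(S)$, so $\sum_{i \in S} X_i \succeq w(S)$ holds (in fact with equality). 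Hence $X \in \mathcal{C}(w)$. Finally, since $\underline{w}(i) \in [\underline{w}(i), \overline{w}(i)] = X_i$ for each $i$, the real vector $x$ lies in $\gen(X) \subseteq \gen(\mathcal{C}(w))$, which is the claim.

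The argument presents essentially no obstacle; the only step requiring care is the interval identity $\sum_{i \in S} w(i) = w(S)$, and this is exactly where both additivity hypotheses are used — additivity of $\underline{w}$ for the lower bound and additivity of $\overline{w}$ for the upper bound. Dropping either hypothesis would in general yield only $\sum_{i \in S} w(i) \preceq w(S)$, which would break the imputation equality $\sum_{i \in N} X_i = w(N)$ and collapse the whole construction. Equivalently, one could instead invoke Theorem \ref{incidence} with $l^{(x)} = 0$ and $u^{(x)}_i = \overline{w}(i) - \underline{w}(i)$ and check $(4.1)$--$(4.4)$, but the direct construction above is more transparent and avoids first having to confirm $x \in \mathcal{SC}(w)$.
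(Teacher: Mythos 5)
Your proof is correct, and at bottom it exhibits exactly the same witness as the paper's proof: your interval vector $X=(w(1),\ldots,w(n))$ is, in the paper's notation, the vector $([q_i-l_i,\,q_i+u_i])_{i\in N}$ for the paper's choices $q_i=\underline{w}(i)$, $l_i=0$, $u_i=|w|(i)$. The difference is only in the routing. The paper first argues that $q$ lies in the core of the lower border game, hence in $\mathcal{SC}(w)$, and then verifies the mixed system $(4.1)$--$(4.4)$ of Theorem \ref{incidence} for $q$, $l$, $u$; since that system is nothing but the unpacked statement $([q_i-l_i,q_i+u_i])_{i\in N}\in\mathcal{C}(w)$, the computation is the same one you perform. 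You instead check directly from the definitions that $X\in\mathcal{I}(w)$ and $X\in\mathcal{C}(w)$ and then observe $x\in\gen(\{X\})\subseteq\gen(\mathcal{C}(w))$. Your route is self-contained: it depends neither on Theorem \ref{incidence} nor on first placing $x$ in $\mathcal{SC}(w)$ (which in any case follows a posteriori from Theorem \ref{temp}). What the paper's detour buys is the explicit intermediate fact that $q\in\mathcal{SC}(w)$, obtained as a core element of the lower border game; this dovetails with the remark immediately following the theorem, where $q$ is identified as the minimum vector of $\mathcal{SC}(w)$, so that in the core coincidence problem only maximal vectors of $\mathcal{SC}(w)$ remain to be tested. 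Both proofs invoke additivity of both border games at the same single spot, namely to turn $\sum_{i\in S}w(i)$ into $w(S)$ with equality in both endpoints, as you correctly emphasize.
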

\begin{proof}
First, let us look at an arbitrary additive game $(A,v_A)$. From additivity condition and the fact that we can
write any subset of $A$ as a union of one-player sets we conclude that $v_A(A) = \bigcup_{i \in A} v_A(\{i \})$ for
every coalition $A$. This implies that vector $a$ with $a_i = v_A(\{i \})$ is in the core.

This argument can be applied to border games of $(N,w)$. The vector $q \in \mathbb{R}^N$ with $q_i = \underline{w}(i)$
is an element of the core of $(N,\underline{w})$ and an element of $\mathcal{SC}(w)$.

For the vector $q$ we want to satisfy the mixed system $(4.1)$-$(4.4)$ of Theorem \ref{incidence}.

Take the vector $l$ containing zeros only and the vector $u$ with $u_i = |w|(i)$. From the additivity, we get that $\sum_{i \in N}q_i - l_i = \underline{w}(N)$ and $\sum_{i \in N}q_i + u_i = \overline{w}(N)$.

Additivity further implies that inequalities $(4.3)$ and $(4.4)$ hold for $q$, $l$ and $u$.  Therefore, $q$ is an element of $\gen(\mathcal{C}(w))$.
\qed \end{proof}

Theorem \ref{check} implies that for games with additive border games, we
need to check the existence of vectors $l$ and $u$ from $(4.1)-(4.4)$ of Theorem \ref{incidence}
for maximal vectors of $\mathcal{SC}$ only. That follows from the fact that for any vector $y \in \mathcal{SC}(w)$ holds $(\underline{w}(1),\underline{w}(2),\ldots,\underline{w}(n)) \le y$. In other words, $(\underline{w}(1),\underline{w}(2),\ldots,\underline{w}(n))$ is a minimum vector of $\mathcal{SC}(w)$.

\section{Strong imputation and core}
\label{sec.4.strong}
In this section, our focus will be on a new concept of \emph{strong imputation} and \emph{strong core}.

\begin{definition} \emph{(Strong imputation)} For a game $(N,w) \in IG^N$ a strong imputation is a vector $x \in \mathbb{R}^N$
such that $x$ is an imputation for every selection of $(N,w)$.
\end{definition}

\begin{definition} \emph{(Strong core)} For a game $(N,w) \in IG^N$ the strong core is a set of vectors $x \in \mathbb{R}^N$
such that $x$ is an element of the core of every selection of $(N,w)$.
\end{definition}

Strong imputation and strong core can be considered as somewhat ``universal''
solutions. We show the following three simple facts about the strong core.

\begin{theorem} \label{dege} For every interval game with nonempty strong core, $w(N)$ is a degenerate interval.
\end{theorem}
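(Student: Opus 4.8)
The plan is to exploit the efficiency requirement hidden inside the definition of the strong core, namely that a strong-core vector must be an imputation (and hence satisfy the budget-balance equality) simultaneously for \emph{every} selection. Since distinct selections are permitted to assign distinct worths to the grand coalition, while the sum of the entries of a fixed payoff vector is a single real number, the grand-coalition value is forced to be constant across selections, which is exactly degeneracy of $w(N)$.

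First I would assume the strong core is nonempty and fix some $x \in \mathbb{R}^N$ lying in it. By definition of the strong core, $x \in C(v)$ for every selection $(N,v) \in \Sel(w)$, and every core vector is in particular an imputation, so the efficiency condition $\sum_{i \in N} x_i = v(N)$ holds for every selection $v$. The crucial observation is that a selection is constrained only by $v(S) \in w(S)$ for each $S$, with no interdependence between coalitions; in particular the value $v(N)$ may be chosen freely anywhere in $w(N) = [\underline{w}(N), \overline{w}(N)]$.

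Next I would construct two explicit selections witnessing the two endpoints. Define $v^-$ by $v^-(N) = \underline{w}(N)$ and $v^-(S) = \underline{w}(S)$ for $S \neq N$, and define $v^+$ by $v^+(N) = \overline{w}(N)$ and $v^+(S) = \underline{w}(S)$ for $S \neq N$; both are valid selections since each assigned value lies in its interval. Applying the efficiency condition to each yields
\[
\underline{w}(N) = \sum_{i \in N} x_i = \overline{w}(N),
\]
so $\underline{w}(N) = \overline{w}(N)$ and $w(N)$ is a degenerate interval.

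I do not anticipate a genuine obstacle here: the statement follows from a single structural feature of selections. If anything, the one point deserving care is the justification that a selection may realize \emph{any} prescribed value of $v(N)$ independently of the other coalition values; this is immediate from the selection definition but is the load-bearing step, so I would state it explicitly rather than leave it implicit.
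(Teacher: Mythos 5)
Your proposal is correct and follows essentially the same route as the paper: the paper's proof also observes that a strong-core element must be efficient for every selection, forcing $\sum_{i \in N} x_i = \underline{w}(N) = \overline{w}(N)$. You merely make explicit (via the two selections $v^-$ and $v^+$) the step the paper leaves implicit, namely that selections can realize both endpoints of $w(N)$.
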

\begin{proof} The theorem follows easily by the fact that an element $c$ of strong core must be efficient for every selection and
therefore $\sum_{i \in N} c_i = \underline{w}(N) = \overline{w}(N)$.
\qed \end{proof}

This leads us to a characterization of games with nonempty strong core.

\begin{theorem} \label{stchar} An interval game $(N,w)$ has a nonempty strong core if and only if $w(N)$ is a degenerate interval
and the upper game $\overline{w}$ has a nonempty core.
\end{theorem}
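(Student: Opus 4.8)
The plan is to prove the two directions of this biconditional separately, leaning on Theorem~\ref{dege} for the forward direction and constructing an explicit strong core element for the reverse direction.

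For the ``only if'' part, suppose the strong core is nonempty and fix some element $c$ in it. Theorem~\ref{dege} already gives that $w(N)$ is degenerate, so half of the conclusion is immediate. It remains to show that $\overline{w}$ has a nonempty core. The natural candidate is $c$ itself: I would argue that $c \in C(\overline{w})$. Indeed, the upper border game $\overline{w}$ is a selection of $(N,w)$ (as noted right after the definition of selection), so since $c$ lies in the core of \emph{every} selection, in particular $c \in C(\overline{w})$. This shows the core of $\overline{w}$ is nonempty.

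For the ``if'' part, assume $w(N)$ is degenerate, say $\underline{w}(N) = \overline{w}(N)$, and let $c \in C(\overline{w})$ be any core element of the upper game. I claim $c$ is in the strong core, i.e.\ $c \in C(v)$ for every selection $v \in \Sel(w)$. Fix such a $v$. First, efficiency: since $v(N) \in w(N)$ and $w(N)$ is degenerate, $v(N) = \overline{w}(N)$, and because $c \in C(\overline{w})$ we have $\sum_{i \in N} c_i = \overline{w}(N) = v(N)$, so $c$ is efficient for $v$. Next, the coalition inequalities: for every $S \in 2^N$ we have $v(S) \le \overline{w}(S) \le \sum_{i \in S} c_i$, where the first inequality is because $v$ is a selection and the second because $c \in C(\overline{w})$; the individual rationality constraints $c_i \ge v(\{i\})$ are the special case $S = \{i\}$. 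Hence $c \in C(v)$ for every selection $v$, so the strong core is nonempty.

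I expect the only subtlety to be making sure the degeneracy of $w(N)$ is genuinely needed for efficiency to transfer across all selections: without it, different selections would impose different efficiency targets $v(N)$, and no single real vector could satisfy all of them simultaneously (this is precisely the content of Theorem~\ref{dege}). Once degeneracy pins down $v(N)$ to the common value $\overline{w}(N)$ for every selection, the coalition constraints are handled uniformly by dominating each $v(S)$ with $\overline{w}(S)$, so the argument goes through cleanly and the main work is simply observing that a core element of the upper game dominates all selections at once.
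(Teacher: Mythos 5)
Your proposal is correct and follows essentially the same route as the paper: both directions hinge on Theorem~\ref{dege} for the degeneracy of $w(N)$ together with the observation that dominating every selection's coalition values $v(S)$ is equivalent to dominating $\overline{w}(S)$ (since the upper border game is itself a selection). The paper merely compresses the verification you spell out into ``proving an equivalence is then straightforward,'' so your write-up fills in exactly the details it omits.
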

\begin{proof}
The theorem follows from a combination of Theorem \ref{dege} and the fact that an element $c$ of the strong core has to satisfy
$\sum_{i \in S} c_i \ge v(S),\ \forall v \in \Sel(w) ,\ \forall S \in 2^N \setminus \emptyset$. We see that this fact is equivalent to condition 
$\sum_{i \in S} c_i \ge \overline{w}(S),\ \forall S \in 2^N \setminus \emptyset$. Proving an equivalence is then straightforward.
\qed \end{proof}

We observe that we can easily derive a characterization of games with a
nonempty strong imputation set.

The strong core also has the following important property.

\begin{theorem}
For every element $c$ of the strong core of $(N,w)$, $c \in \gen(\mathcal{C}(w))$.
\end{theorem}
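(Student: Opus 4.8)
The plan is to exhibit, for a given strong-core element $c$, an explicit interval core element $X \in \mathcal{C}(w)$ whose generated set contains $c$; the natural candidate is the degenerate interval vector $X = ([c_1,c_1],\ldots,[c_n,c_n])$, for which $\gen(X) = \{c\}$, so that $c \in \gen(X) \subseteq \gen(\mathcal{C}(w))$ once membership $X \in \mathcal{C}(w)$ is established.

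First I would collect the two facts that strong-core membership supplies. By Theorem \ref{dege}, $w(N)$ is a degenerate interval, so $\underline{w}(N) = \overline{w}(N)$; and since $c$ is efficient for every selection, $\sum_{i\in N} c_i = \overline{w}(N) = \underline{w}(N)$. Next, exactly as in the proof of Theorem \ref{stchar}, the requirement that $c$ lie in the core of \emph{every} selection is equivalent to the coalitional inequalities $\sum_{i \in S} c_i \ge \overline{w}(S)$ for all nonempty $S$, the binding selection being the upper border game $\overline{w}$ on $S$. Combined with $\overline{w}(S) \ge \underline{w}(S)$, this yields $\sum_{i\in S} c_i \ge \overline{w}(S) \ge \underline{w}(S)$ for every nonempty $S$.

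With these in hand I would verify directly that $X$ meets the definition of $\mathcal{C}(w)$. Efficiency holds because $\sum_{i\in N} X_i = [\sum_{i \in N} c_i, \sum_{i\in N} c_i] = w(N)$, using degeneracy of $w(N)$. Each $X_i \succeq w(i)$ since $c_i \ge \overline{w}(i) \ge \underline{w}(i)$, which is the singleton case of the coalitional inequality. Finally $\sum_{i\in S} X_i = [\sum_{i\in S} c_i, \sum_{i\in S} c_i] \succeq w(S)$ for every nonempty $S$, because both its endpoints equal $\sum_{i \in S} c_i$, which dominates both $\underline{w}(S)$ and $\overline{w}(S)$. Hence $X \in \mathcal{C}(w)$ and the claim follows.

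I do not expect a serious obstacle: the whole argument hinges on the single observation that degeneracy of $w(N)$ forces any interval core preimage of $c$ to be degenerate as well. Equivalently, if one routes the proof through Theorem \ref{incidence}, the nonnegative slack vectors $l^{(c)}$ and $u^{(c)}$ must both vanish, since $\sum_{i \in N} c_i = \underline{w}(N) = \overline{w}(N)$ together with nonnegativity leaves no room in $(4.1)$ and $(4.2)$; inequalities $(4.3)$ and $(4.4)$ then reduce to the strong-core conditions already recorded in the proof of Theorem \ref{stchar}. I would favour the direct construction of the degenerate $X$, as it avoids having to invoke $c \in \mathcal{SC}(w)$ as a separate step.
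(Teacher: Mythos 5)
Your proof is correct and is essentially the paper's own argument: the paper also exhibits the degenerate interval vector built from $c$, phrased as choosing the zero slack vectors $l = u = 0$ in the mixed system $(4.1)$--$(4.4)$ of Theorem \ref{incidence}, with the efficiency equations supplied by Theorem \ref{dege} and the coalitional inequalities by Theorem \ref{stchar}. Your direct verification of the definition of $\mathcal{C}(w)$ and the paper's zero-slack formulation are the same construction, as you yourself observe in your closing paragraph.
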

\begin{proof}
The vector $c$ has to satisfy mixed system $(4.1)$-$(4.4)$ of Theorem \ref{incidence} for some $l, u \in \mathbb{IR}^N$. We show that $l_i = u_i = 0$
will achieve this.

Equations $(4.1)$ and $(4.2)$ are satisfied by taking Theorem \ref{dege} into
account. Inequalities $(4.3)$ and $(4.4)$ are satisfied as the consequence of
Theorem \ref{stchar}.
\qed \end{proof}

The reason behind the using of name strong core and strong imputation comes
from interval linear algebra, where strong solutions of an interval system are
solutions for any realization (selection) of interval matrices $A$ and $b$ in
$Ax = b$.

One could ask why we do not introduce a strong game as a game in which each of
its selection has an nonempty core. This is because such games are
already defined as \emph{strongly balanced games} (see e.g.
\cite{alparslan2008cores}).

\section{Concluding remarks}

Selections of an interval game are very important since they do not contain
any additional uncertainty. On the top of that, selection-based classes and
the strong core and imputation have the crucial property that although we deal
with uncertain data, all possible outcomes preserve important properties. In
case of selection classes it is preserving superadditivity, supermodularity
etc. In case of the strong core it is an invariant of having particular stable
payoffs in each selection. Furthermore, ``weak'' concepts like $\mathcal{SC}$
are important as well since if $\mathcal{SC}$ is empty, no selection has a
stable payoff.

The importance of studying selection-based classes instead of the existing
classes using $\succeq$ operator can be further illustrated by the following
two facts:

\begin{itemize}[leftmargin=*]
\item Classes based on weakly better operator may contain games with selections that do not have any link with the defining property of their border games and consequently no link with the name of the class. For example, superadditive interval games may contain a selection that is not superadditive.
\item Selection-based classes are not contained in corresponding classes based on weakly better operator. Therefore, the results on existing classes are not directly extendable to selection-based classes.
\end{itemize}

Our results provide an important tool for handling cooperative situations
involving interval uncertainty which is a very common situation in various OR
problems. Some specific applications of interval games were already examined.
See \cite{alparslan2014cooperative,gok2009airport,forest} for
applications to mountain situations, airport games, and forest situations,
respectively. However, these papers do not use a selection-based approach and
therefore to study implications of our approach to them can be a theme for
future research.

To further study properties of selection-based classes is a possible topic.
One of the directions could be to introduce strictly selection convex games or
decomposable games and examine them. Another fruitful direction can be
extending of the definition of stable set to interval games using selections.
For example, one could look at a union or an intersection of stable sets for
each selection. Studying Shapley value and other concepts in interval
games context may be interesting as well. Some of these problems are work in
progress.

\section*{Acknowledgments}
This research was supported by the grant CE-ITI P202/12/G061 of GA\v{C}R.

\bibliographystyle{plain}
\bibliography{bibliography}

\end{document}